\documentclass[a4paper,10pt]{amsart}

\usepackage{amsmath}
\usepackage{amsthm}
\usepackage{amssymb}
\usepackage{amsfonts}
\usepackage{mathrsfs}  
\usepackage{enumitem} 
\usepackage[bookmarks=false,hyperindex,pdftex,colorlinks,citecolor=blue,urlcolor=cyan]{hyperref}
\usepackage[a4paper,lmargin=3cm,rmargin=3cm,tmargin=4cm,bmargin=4cm,marginparwidth=2.8cm,marginparsep=1mm]{geometry} 

\DeclareMathOperator{\lspan}{span}                          
\DeclareMathOperator{\conv}{conv}                           
\DeclareMathOperator{\Lip}{Lip}                             

\newcommand{\RR}{\mathbb{R}}                                
\newcommand{\ep}{\varepsilon}

\newcommand{\abs}[1]{\left|{#1}\right|}                     
\newcommand{\pare}[1]{\left({#1}\right)}                    
\newcommand{\set}[1]{\left\{{#1}\right\}}                   
\newcommand{\norm}[1]{\left\|{#1}\right\|}                  
\newcommand{\duality}[1]{\left<{#1}\right>}                 
\newcommand{\cl}[1]{\overline{#1}}                          
\newcommand{\wscl}[1]{\overline{#1}^{w^*}}                  
\newcommand{\wconv}{\stackrel{w}{\longrightarrow}}          
\newcommand{\restrict}{\mathord{\upharpoonright}}           

\newcommand{\lipfree}[1]{\mathcal{F}({#1})}                 
\newcommand{\lipnorm}[1]{\norm{#1}_L}                       
\newcommand{\ideal}[1]{\mathcal{I}({#1})}
\newcommand{\const}[1]{\mathcal{K}({#1})}

\renewcommand{\leq}{\leqslant}
\renewcommand{\geq}{\geqslant}

\theoremstyle{plain}
\newtheorem{theorem}{Theorem}
\newtheorem{lemma}[theorem]{Lemma}

\newtheorem{proposition}[theorem]{Proposition}

\newtheorem*{claim*}{Claim}

\theoremstyle{definition}
\newtheorem*{definition*}{Definition}
\newtheorem{definition}[theorem]{Definition}

\newtheorem{remark}[theorem]{Remark}

\begin{document}
\title{The unique predual problem for Lipschitz spaces, revisited}

\author[R. J. Aliaga]{Ram\'on J. Aliaga}
\address[R. J. Aliaga]{Instituto Universitario de Matem\'atica Pura y Aplicada,
Universitat Polit\`ecnica de Val\`encia,
Camino de Vera S/N,
46022 Valencia, Spain}
\email{ramon.aliaga@upv.es}

\author[F. Vico]{Felipe Vico}
\address[F. Vico]{Instituto de Telecomunicaciones y Aplicaciones Multimedia,
Universitat Polit\`ecnica de Val\`encia,
Camino de Vera S/N,
46022 Valencia, Spain}
\email{fevibon@teleco.upv.es}

\begin{abstract}
In his 2018 paper ``On the unique predual problem for Lipschitz spaces'', N. Weaver published proofs that Banach spaces $\mathrm{Lip}_0(M)$ of Lipschitz functions on a complete metric space $M$ have strongly unique preduals whenever $M$ has finite diameter or is geodesic. A gap was recently noticed in the proof of a crucial lemma that claimed that the property of having a strongly unique predual passes to $1$-codimensional weak$^*$-closed subspaces. In this note, we confirm that the lemma is actually false by providing an explicit counterexample. We also expand on some of Weaver's original arguments to provide a new, valid proof of the following particular case: $\mathrm{Lip}_0(M)$ has a strongly unique predual whenever $M$ is a convex subset of a finite-dimensional normed space.
\end{abstract}

\subjclass[2020]{Primary 46B10; Secondary 46E15}

\keywords{Unique predual, strongly unique predual, Lipschitz space, Lipschitz-free space.}

\maketitle

\section*{Introduction}

Let $X$ be a dual Banach space. We say that $X$ has a \emph{unique predual} if there is a unique, up to linear isometry, Banach space $Y$ such that $Y^*$ is linearly isometric to $X$, and we say that $X$ has a \emph{strongly unique predual} if any linear isometry from $X$ onto a dual Banach space $Y^*$ is weak$^*$-to-weak$^*$ continuous, hence the adjoint of an isometry between the corresponding preduals. We may identify preduals of $X$ as subspaces of the dual $X^*$ via the canonical inclusion of every Banach space in its bidual; explicitly, a closed subspace $Y\subset X^*$ is a predual of $X$ if the mapping $T:X\to Y^*$ given by $\duality{Tx,y}=\duality{y,x}$ for $x\in X$, $y\in Y$ is a surjective linear isometry. Using this identification, $X$ has a strongly unique predual if there is exactly one such subspace $Y\subset X^*$, and it has a unique predual if all such $Y$ are isometric. The latter is a formally weaker property, and it is currently unknown whether both properties are actually equivalent. Examples of spaces that are strongly unique preduals (i.e. such that their duals have strongly unique preduals) include spaces with the Radon-Nikod\'ym property \cite{Godefroy_predual}; preduals of von Neumann algebras, including the usual $L_1$-spaces \cite{Godefroy_predual}; and separable L-embedded spaces \cite{Pfitzner_x}. The sequence space $\ell_1$ is maybe the simplest example of a dual Banach space failing to have a unique predual.


In \cite{Weaver18}, N. Weaver studied the unique predual problem for Lipschitz spaces, i.e. Banach spaces of real-valued Lipschitz functions on a metric space $M$. These spaces will be defined formally in Section \ref{sec:theorem 2}; for now, we will just mention that there are two possible variants of such spaces, denoted $\Lip(M)$ and $\Lip_0(M)$. The theory of Lipschitz spaces bears some resemblances with the theory of von Neumann algebras \cite{AP22,Weaver2,Weaver96_3}, and so one could expect them to have strongly unique preduals as well. Weaver confirmed this much for $\Lip(M)$ spaces \cite[Theorem 2.4]{Weaver18}, with a proof reminiscent of Sakai's original proof for von Neumann algebras \cite{Sakai}. He then deduced that spaces of the form $\Lip_0(M)$ have strongly unique preduals as well when $M$ has finite diameter, by means of the following lemma attributed to U. Bader.

\setcounter{theorem}{-1} 
\begin{lemma}[{\cite[Lemma 3.1]{Weaver18}}]
\label{lm:weaver lemma}
Let $X$ be a dual Banach space with a strongly unique predual and let $V$ be a weak$^*$ closed subspace of $X$ of codimension $1$. Then $V$ has a strongly unique predual.
\end{lemma}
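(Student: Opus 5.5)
The statement cannot be proved as it stands: the abstract of this paper says the lemma is false. My plan is therefore to first run the natural proof to locate exactly where it fails, and then to turn that failure into a counterexample. I have not settled which concrete counterexample works.

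\textbf{The natural proof attempt.} Write $X=Y^*$, where $Y\subset X^*$ is the unique predual. A weak$^*$-closed hyperplane has the form $V=\{x\in X:\duality{x,y_0}=0\}$ for some $y_0\in Y$ with $\norm{y_0}=1$. Then $Y/[y_0]$ is a predual of $V$. Now suppose $W\subset V^*$ is any predual of $V$. The idea is to manufacture from $W$ a predual of $X$.
\begin{enumerate}[label=(\roman*)]
\item Fix $x_0\in X$ with $\duality{x_0,y_0}=1$, so that $X=V\oplus[x_0]$.
\item Define $Z=\{\phi\in X^*:\phi\restrict_V\in W\}$. This equals $\widetilde W+[y_0]$, where $\widetilde W$ consists of suitable extensions of the elements of $W$.
\item Show that $X\to Z^*$ is a surjective isometry. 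Strong uniqueness for $X$ then gives $Z=Y$, and restricting to $V$ gives $W=Y/[y_0]$.
\end{enumerate}
Step (iii) is where I expect the argument to break. Injectivity and surjectivity are linear algebra in codimension one, but isometry is not. The norm of $x=v+tx_0$ is not determined by the norm of $v$, the scalar $t$, and $W$ alone. Checking $\norm{x}=\sup_{\phi\in B_Z}\duality{\phi,x}$ requires the unit ball of $Z$ to be weak$^*$-dense in $B_{X^*}$. That density is a statement about how $W$ sits relative to the position of $x_0$, and it is only known for the original predual $Y/[y_0]$. Nothing forces it for an arbitrary $W$. This is exactly the gap noticed in the proof of \cite[Lemma 3.1]{Weaver18}.

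\textbf{Turning the failure into a counterexample.} I would look for a dual space $X$ with a strongly unique predual containing a weak$^*$-closed hyperplane $V$ isometric to $\ell_1$. Since $\ell_1$ has the non-isometric preduals $c_0$ and $c$, such a $V$ would refute the statement.
\begin{itemize}
\item Strong uniqueness of the predual of $X$ must be verified by some criterion known in the literature, such as the RNP, L-embeddedness \cite{Pfitzner_x}, or a Sakai-type argument like the one for $\Lip(M)$ \cite{Weaver18}.
\item Isomorphic properties cannot separate the two situations. A hyperplane quotient of an RNP space still has the RNP, so the construction must exploit the \emph{isometric} failure of uniqueness for $\ell_1$.
\end{itemize}
My first attempt would be $X=\ell_1\oplus\RR$, with a norm chosen so that the extra coordinate couples to all coordinates of $\ell_1$. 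For instance, I would try a norm built from a unit vector $y_0$ of a space like $c$, with $V=y_0^\perp$.

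I would then check two things. First, that $X$ has a strongly unique predual. Second, that $V$ is isometric to $\ell_1$ and carries the two inequivalent weak$^*$ topologies coming from $c_0$ and $c$. The main obstacle is the first check: choosing the norm on $X$ so that strong uniqueness actually holds, while keeping the hyperplane isometric to $\ell_1$.
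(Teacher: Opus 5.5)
You correctly see that the statement is false, and your target is exactly the shape of the paper's counterexample (Theorem~\ref{th:theorem 1}): a dual space $X$ with a strongly unique predual containing a weak$^*$-closed hyperplane $V$ isometric to $\ell_1$. Your diagnosis of the natural proof is also sound. The space $Z$ you build is always an isomorphic predual of $X$, by the open mapping theorem, and only the isometry can fail.

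\textbf{The gap.} The proposal stops exactly where the work begins. You never fix a norm on $X$, and none of the tools you list can certify strong uniqueness here. Any such $X$ is isomorphic to $V\oplus\RR\cong\ell_1$, so its predual $Y$ has separable dual and contains no copy of $\ell_1$.
\begin{itemize}
\item If $Y$ were L-embedded, it would be weakly sequentially complete. Rosenthal's $\ell_1$ theorem would then make $Y$ reflexive, which is absurd since $Y^*\cong\ell_1$. So Pfitzner's theorem is unavailable.
\item The RNP is excluded by your own remark: $Y/[y_0]$ would inherit it, and $V$ would then have a strongly unique predual.
\item A Sakai-type argument relies on the specific structure of von Neumann algebras or of $\Lip(M)$, and has no counterpart for a renormed $\ell_1$.
\end{itemize}
So item (b), ``$X$ has a strongly unique predual'', is not established for any candidate. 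That is the only nontrivial step of the disproof.

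\textbf{The missing idea.} You need Godefroy's criterion in its \emph{span} form. If the closed linear span of the points of continuity of the identity $(B_{X^*},w^*)\to(B_{X^*},w)$ is norming for $X$, then $X$ has a strongly unique predual. The reason is that every predual contains these points by Goldstine's theorem, and a predual cannot properly contain a closed subspace that is norming for $X$.

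The paper renorms $c_0$ so that its unit ball is $\cl{\conv}\pare{\Gamma\cup\set{y\in B_{c_0}:y_1=0}}$, with $\Gamma=\set{\pm e_1\pm e_n : n\geq 2}$. This gives
$$\norm{x}_X=\max\set{\sum_{n\geq 2}\abs{x_n},\ \abs{x_1}+\max_{n\geq 2}\abs{x_n}}$$
on $X=Y^*$. This is an $\ell_1\oplus\RR$ in which the extra coordinate is coupled to all the others, as you guessed, and $V=\ker e_1$ is isometric to $\ell_1$.

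Each element of $\Gamma$ is a point of weak$^*$-to-norm continuity. For instance, $x_n=\tfrac{1}{3}e_1^*+\tfrac{2}{3}e_n^*\in B_X$ gives $\norm{z-(e_1+e_n)}_{X^*}\leq 6\pare{1-\duality{x_n,z}}$ for all $z\in B_{X^*}$. Moreover, $\lspan(\Gamma)$ contains every $e_n$ and so is dense in $Y$.

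Note that $\cl{\conv}(\Gamma)\neq B_Y$, so even Godefroy's original convex-hull formulation would not suffice. Your plan lacks two things: a concrete norm with this ``strongly exposed points whose span is dense but whose hull is not the ball'' feature, and the span criterion to exploit it.
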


\noindent The connection to Lemma \ref{lm:weaver lemma} is that, for bounded $M$, $\Lip_0(M)$ is a weak$^*$ closed, $1$-codimensional subspace of some $\Lip(M')$. Based on the bounded case, Weaver was further able to deduce that $\Lip_0(M)$ has a strongly unique predual whenever $M$ is geodesic, including the important cases where $M$ is a Banach space with the norm metric or a convex subset thereof.

Unfortunately, in 2026 M. Gonz\'alez reported a gap in the proof of Lemma \ref{lm:weaver lemma} that called all results for $\Lip_0$ spaces into question (see \cite{Weaver_corrigendum} and the updated arXiv preprint \cite{Weaver_arxiv}, where the issue is acknowledged). It remained unclear whether the statement of Lemma \ref{lm:weaver lemma} itself was correct and its proof could be amended. In this note, we settle this question and show that Lemma \ref{lm:weaver lemma} is, in fact, false by constructing an explicit counterexample.

\begin{theorem}
\label{th:theorem 1}
There exists a Banach space $X$ such that
\begin{enumerate}[label={\upshape{(\alph*)}}]
\item $X$ is isomorphic to $\ell_1$,
\item $X$ has a strongly unique predual, and
\item $X$ has a weak$^*$ closed, $1$-complemented, $1$-codimensional subspace isometric to $\ell_1$.
\end{enumerate}
\end{theorem}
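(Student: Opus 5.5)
The plan is to build $X$ as a renorming of $\ell_1\oplus\RR$ whose unit ball couples the extra coordinate to the "behaviour at infinity" of $\ell_1$. The hyperplane $V=\ell_1\oplus\{0\}$ should be isometric to $\ell_1$, $1$-complemented by the coordinate projection, and weak$^*$ closed as the kernel of a functional in the canonical predual. Since $\ell_1$ has the non-equivalent preduals $c_0$ and $c$, $V$ then fails to have a strongly unique predual, which is exactly what makes Theorem \ref{th:theorem 1} a counterexample to Lemma \ref{lm:weaver lemma}. Concretely, I would let $B_X$ be the closed absolutely convex hull of $\pm(e_n,0)$ together with points $\pm(e_n,a_n)$, or a similar family, with $a_n$ chosen so that the section $t=0$ is exactly $B_{\ell_1}$. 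Then $X$ is isomorphic to $\ell_1$, giving (a). The canonical predual would be $Y_0=c\oplus\RR\subset\ell_\infty\oplus\RR=X^*$ (or $c_0\oplus\RR$, depending on how the $a_n$ are arranged), and (c) is a direct check from the definition of the norm.

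The substance is (b). I would use the basic fact that a closed subspace $Y\subset X^*$ is a predual if and only if $Y$ separates points and $B_X$ is $\sigma(X,Y)$-compact with $Y$ norming. For any predual $Y$, every extreme point of $B_X$ lies in the $\sigma(X,Y)$-closure of the set of extreme points. Moreover, any $\sigma(X,Y)$-cluster point of a sequence of extreme points lies in $B_X$ and must interact with the faces of $B_X$ compatibly with compactness. The key steps, in order, are these.

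First, identify the extreme points and maximal faces of $B_X$.

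Second, show that for any predual $Y$ the sequence $(e_n,0)$, resp.\ $(e_n,a_n)$, is forced to $\sigma(X,Y)$-converge to one specific point. The geometry leaves exactly one point of $B_X$ that can be a weak$^*$ limit without destroying compactness of some face. The extra coordinate is designed to pin down the "limit functional", which in $\ell_1$ alone is free; this freedom is what produces the distinct preduals $c_0$ and $c$.

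Third, deduce that the functionals $e_n^*\in\ell_\infty$ together with the $t$-coordinate all belong to every predual $Y$. The argument is that each of them is determined by its behaviour on the extreme points plus this forced limit, hence is $\sigma(X,Y)$-continuous on $B_X$, and by Banach--Dieudonn\'e/Krein--Smulian it lies in $Y$.

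Fourth, conclude $Y\supseteq Y_0$. Since $Y_0$ is itself a predual and predual containment forces equality, because both norm $X$ and $Y_0$ is already weak$^*$-dense in the relevant sense, we get $Y=Y_0$.

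The main obstacle is the second step: choosing the norm so that the weak$^*$ limit of the extreme points is genuinely forced, while the section $t=0$ remains exactly $B_{\ell_1}$. If the coupling is too weak, we recover the freedom of $\ell_1$ (for instance $\ell_1\oplus_\infty\RR$ still has the two preduals $c_0\oplus_1\RR$ and $c\oplus_1\RR$). If it is too strong, $V$ stops being isometric to $\ell_1$. My first attempt would take $a_n\to1$ strictly increasing, with an additional extreme point $\pm(0,1)$. A candidate limit other than the canonical one would then lie in the relative interior of a face, and I would show it produces a non-closed face section, contradicting $\sigma(X,Y)$-compactness. I expect to need to adjust the $a_n$ (e.g.\ $a_n=1-2^{-n}$) to make this face argument go through.
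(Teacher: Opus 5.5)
There is a genuine gap in (b). The second step is not merely left open: the candidate norm you propose provably fails to have a strongly unique predual.

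Take $B_X=\overline{\mathrm{aconv}}\set{\pm(e_n,0),\pm(e_n,a_n),\pm(0,1)}$ with any $a_n\in[0,2]$, e.g.\ $a_n=1-2^{-n}$. Its polar in $\ell_\infty\oplus\RR$ is $B_{X^*}=\set{(\phi,s):\abs{\phi_n}\leq1,\ \abs{\phi_n+a_ns}\leq1,\ \abs{s}\leq1 \text{ for all } n}$. Besides $c_0\oplus\RR$, the closed subspace $Y'=\set{(\phi,s):\phi_n\to -s}$ is also a predual:
\begin{enumerate}[label={\upshape{(\roman*)}}]
\item It is norming. For $(\phi,s)\in B_{X^*}$, the truncations $(\phi_1,\dots,\phi_N,-s,-s,\dots;\,s)$ stay in $B_{X^*}$ because $\abs{(1-a_n)s}\leq1$. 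They converge to $(\phi,s)$ pointwise on $\ell_1\oplus\RR$.
\item The map $X\to(Y')^*$ is onto. The map $(\phi,s)\mapsto\phi$ identifies $Y'$ with $c$, so every functional $\phi\mapsto\alpha\lim\phi+\sum_n x_n\phi_n$ on $Y'$ is induced by $(x,-\alpha)\in X$.
\end{enumerate}
In $\sigma(X,Y')$ one gets $(e_n,0)\to(0,-1)$ and $(e_n,a_n)\to0$, so no limit is forced. For $a_n\in[0,1]$, every tail value $\lambda s$ with $\lambda\in[-1,0]$ gives yet another predual.

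The obstruction is structural. Your dual ball is cut out by coordinatewise constraints, so the tail of any functional can be overwritten by a constant compatible with $s$. Coupling $t$ to a sequence of extreme points only relocates the freedom of $\ell_1$, and adjusting the $a_n$ cannot help.

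Your third step is also unjustified as stated. Knowing the $\sigma(X,Y)$-limit $z$ of one sequence does not make $e_n^*$ continuous on $(B_X,\sigma(X,Y))$. What it does give is $Y\subset\set{f\in X^*: f(e_n,0)\to f(z)}$. For $z=0$ this set is $c_0\oplus\RR$, and that inclusion would suffice, since no predual strictly contains another.

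The missing idea is to build the coupling into the predual so that $B_{X^*}$ has enough points of weak$^*$-to-norm continuity, and then argue in $B_{X^*}$ rather than with faces of $B_X$.
\begin{itemize}
\item The paper renorms $c_0$ by $B_Y=\cl{\conv}(\Gamma\cup\set{y\in B_{c_0}:y_1=0})$ with $\Gamma=\set{\pm e_1\pm e_n:n\geq2}$. On $X=Y^*$ this gives $\norm{x}_X=\max\set{\sum_{n\geq2}\abs{x_n},\ \abs{x_1}+\max_{n\geq2}\abs{x_n}}$. The extra coordinate is tied to $\max_{n\geq2}\abs{x_n}$, i.e.\ to all coordinates uniformly, not to the behaviour at infinity.
\item Consequently the face $\set{y\in B_{X^*}:y_1=1}$ is the $\ell_1$-ball $\set{e_1+z: z_1=0,\ \sum_{n\geq2}\abs{z_n}\leq1}$ rather than a product-type set.
\item Each $y=\varepsilon_1e_1+\varepsilon_2e_n\in\Gamma$ is strongly exposed by $x=\frac13\varepsilon_1e_1^*+\frac23\varepsilon_2e_n^*$: one has $\norm{z-y}_{X^*}\leq6(1-\duality{x,z})$ for all $z\in B_{X^*}$.
\item By Goldstine, every predual contains these points. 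Their closed span is all of $c_0$, since $e_1,e_n\in\conv(\Gamma)$, and $c_0$ is norming. Hence $c_0$ is the only predual by Godefroy's criterion (Proposition~\ref{pr:godefroy criterion}).
\end{itemize}
Parts (a) and (c), with $V=\set{x_1=0}$, are then checked just as in your plan.
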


While Lemma \ref{lm:weaver lemma} is false, the rest of Weaver's arguments in \cite{Weaver18} remain valid to the best of our knowledge, including the passage from ``$\Lip_0(M)$ has a strongly unique predual when $M$ is bounded'' to ``$\Lip_0(M)$ has a strongly unique predual when $M$ is geodesic'' as it is independent of Lemma \ref{lm:weaver lemma} \cite{Weaver_arxiv,Weaver_corrigendum}. Expanding on the proof of that implication, we are able to provide a new, valid proof of the following particular case.

\begin{theorem}
\label{th:theorem 2}
If $M$ is a convex subset of a finite-dimensional Banach space then $\Lip_0(M)$ has a strongly unique predual.
\end{theorem}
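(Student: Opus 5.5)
By the Introduction's identification, it suffices to show that every predual $Y\subset\Lip_0(M)^*$ contains $\delta(x)$ for all $x\in M$. Indeed, if $\lipfree{M}=\cl{\lspan}\,\delta(M)\subset Y$ and $Y\neq\lipfree{M}$, Hahn–Banach gives a nonzero $\phi\in Y^*$ vanishing on $\lipfree{M}$. Since $Y^*=\Lip_0(M)$ isometrically, $\phi$ is a Lipschitz function vanishing at every point, hence $\phi=0$, a contradiction.

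By Krein–Smulian, $\delta(x)\in Y$ exactly when $\delta(x)$ is $\sigma(\Lip_0(M),Y)$-continuous on the unit ball $B$. So the goal becomes the following: whenever a net $(f_i)\subset B$ converges to $f$ in the $Y$-weak$^*$ topology, then $f_i\to f$ pointwise. My plan is to characterize pointwise-convergence data by purely isometric/linear notions, which any predual topology must respect. The natural objects, following Weaver, are the weak$^*$-closed subspaces
\[
\ideal{A}=\set{f: f\restrict_A=0}
\qquad\text{and}\qquad
\const{A}=\set{f: f\restrict_A \text{ constant}} .
\]

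\textbf{Step 1.} Using the lattice/algebraic structure that survives isometries (as in Weaver's $\Lip(M)$ argument), I will show that for closed bounded convex $C\subset M$ the subspace $\const{C}$ is $Y$-weak$^*$ closed. The intended route is to describe $\const{C}$ intrinsically, for instance as an annihilator of a norm-closed set of functionals defined by the geometry of the unit ball.

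\textbf{Step 2.} I will exploit convexity and finite dimensionality. By Rademacher's theorem, $f\mapsto\nabla f$ identifies $\Lip_0(\mathrm{int}\,M)$ with a weak$^*$-closed subspace of $L_\infty(M;E^*)$, and $E=\RR^d$ is locally compact. This should let me reduce to compact convex pieces $C$. On each piece, the restriction quotient $\Lip_0(M)\to\Lip_0(C)$, which has kernel $\ideal{C}$, should be $Y$-weak$^*$ continuous once Step 1 is in hand. The strongly unique predual result for bounded spaces, coming from Weaver's $\Lip(M)$ theorem, then has to be transported to $\Lip_0(C)$ directly. In place of the false Lemma \ref{lm:weaver lemma}, I plan to use the explicit $1$-complemented embedding $\Lip_0(C)\hookrightarrow\Lip(C)$, checking by hand that the complementing projection is weak$^*$ continuous for every candidate predual.

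\textbf{Step 3.} I will glue the pieces. Write $M=\bigcup_n C_n$ with $C_n=M\cap nB_E$, each convex and compact. Given a bounded $Y$-weak$^*$ convergent net, its restrictions converge pointwise on each $C_n$ by Step 2, and hence pointwise on $M$.

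I expect the main obstacle to be Step 2, and specifically the bounded case without the codimension-one lemma. This is exactly where Weaver's argument breaks down, and Theorem \ref{th:theorem 1} shows that no abstract Banach-space argument can work there. What I would try first is to use the finite-dimensional convex structure concretely. An isometry of $\Lip_0(C)$ onto a dual space preserves the extreme points of the dual ball (molecules, by known results for compact convex sets) up to weak$^*$-closure. I would then argue that the $Y$-weak$^*$ topology must agree with pointwise convergence on the separable, norm-compact family of normalized molecules $\bigl(\delta(x)-\delta(y)\bigr)/d(x,y)$.
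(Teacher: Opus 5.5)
There is a genuine gap, and it is the step you flag yourself. Your reduction to $\delta(x)\in Y$, Step 3, and Step 1 for the particular sets $C_n=M\cap nB_E$ only reproduce Weaver's valid implication ``bounded $\Rightarrow$ geodesic''; Step 1 works there because the $C_n$ are closed balls centred at the base point, so Weaver's argument (Proposition~\ref{pr:k ball}) applies. All the content then lies in showing that $\Lip_0(C)$ has a strongly unique predual for compact convex $C$, and Step 2 gives no working argument for this.

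\textbf{The embedding route is not well posed.} Preduals of $\Lip(C)$ are already unique. A predual $Z$ of $\Lip_0(C)$, on the other hand, induces no weak$^*$ topology on $\Lip(C)$ against which to test the projection. To use uniqueness for $\Lip(C)$ you would first have to extend $Z$ to a predual of $\Lip(C)$. Theorem~\ref{th:theorem 1} shows this can fail for a weak$^*$ closed hyperplane even when the norm-one projection onto it is weak$^*$ continuous.

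\textbf{The extreme-point route rests on false premises.} For convex $C$ no molecule is extreme, since $m_{xy}=\frac{d(x,z)}{d(x,y)}m_{xz}+\frac{d(z,y)}{d(x,y)}m_{zy}$ for any $z\neq x,y$ on the segment $[x,y]$. The adjoint of an isometry onto $Z^*$ preserves extreme points of $B_{\Lip_0(C)^*}$, and none of these is a molecule either, because it would then be extreme in $B_{\lipfree{C}}$. The normalized molecules are not norm compact. For $C=[0,1]$, $\delta(h)/h$ corresponds to $h^{-1}\chi_{[0,h]}\in L_1$, and for $h=2^{-k}$ these are pairwise at distance at least $1$. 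Asking that $\sigma(\Lip_0(C),Z)$ agree with pointwise convergence on molecules is a restatement of the goal. Finally, the Rademacher embedding into $L_\infty$ is weak$^*$ continuous only for the canonical predual, so it says nothing about an arbitrary $Z$.

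The idea you are missing is to reduce the dimension instead of the diameter, so that no bounded case is ever needed.

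With the base point $0$ interior to $M$, every section $W=H\cap M$ by a hyperplane $H\ni 0$ is a geodesic wall. The two open half-spaces cut out sets $A,B$ that are open in $M$ and connected, and each segment $[a,b]$ with $a\in A$, $b\in B$ meets $W$. One then has $B_{\const{W}}=B_{\const{A}}+B_{\const{B}}$. Here $B_{\const{A}}$ and $B_{\const{B}}$ are weak$^*$ compact for every predual: for open connected $U$, $\const{U}$ is the intersection of the spaces $\const{D}$ over closed balls $D\subset U$, and balls are handled by Weaver's intersection-of-translates trick. By Krein--\v{S}mulyan, $\ideal{W}=\const{W}$ is therefore weak$^*$ closed for every predual $Y$.

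Lemma~\ref{lm:predual and ideal} together with the induction hypothesis for $\Lip_0(H\cap M)$ then gives $\lipfree{H\cap M}\subset Y$. These hyperplanes cover $M$. The induction bottoms out at $\Lip_0(I)\cong L_\infty$, whose predual is classically strongly unique.
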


Our proof of Theorem \ref{th:theorem 2} combines arguments from \cite{Weaver18} with a new induction process on the dimension of $M$ so, instead of reducing the problem to the bounded case, we reduce it to the one-dimensional case where $\Lip_0(M)=L_\infty$ is known to have a strongly unique predual. As a particular case of Theorem \ref{th:theorem 2}, Lipschitz spaces over finite-dimensional Banach spaces have strongly unique preduals. Whether $\Lip_0(M)$ has a unique predual when $M$ is an infinite-dimensional Banach space (or \textit{any} metric space, for that matter) remains an open problem.

The proofs of Theorems \ref{th:theorem 1} and \ref{th:theorem 2} are provided in the next two independent sections. For the remainder of the paper, $B_X$ will denote the closed unit ball of a Banach space $X$ as usual.

\section{Proof of Theorem 1}
\label{sec:theorem 1}

We start by defining the predual of $X$ in Theorem \ref{th:theorem 1}. It will be an equivalent renorming of the sequence space $c_0$ with real scalars. We denote by $e_n$ the $n$-th canonical basis vector of $c_0$, and set
$$
\Gamma = \set{e_1+e_n,e_1-e_n,-e_1+e_n,-e_1-e_n \,:\, n\geq 2} \subset c_0 .
$$
Then we let $\norm{\cdot}_Y$ be the norm on $c_0$ whose unit ball is the set
$$
B_Y = \cl{\conv}\pare{\Gamma \cup \set{y\in B_{c_0} : y_1=0}} ,
$$
and put $Y=(c_0,\norm{\cdot}_Y)$. To see that this does in fact define an equivalent renorming of $c_0$, it suffices to verify that
\begin{equation}
\label{eq:Y eqv norm}
\tfrac{1}{2}B_{c_0} \subset B_Y \subset B_{c_0} .
\end{equation}
The rightmost inclusion in \eqref{eq:Y eqv norm} is clear as $\Gamma\subset B_{c_0}$. For the other one, note that
$$
e_1 = \tfrac{1}{2}(e_1+e_2) + \tfrac{1}{2}(e_1-e_2) \in \conv(\Gamma)
$$
and similarly $-e_1\in\conv(\Gamma)$, hence $te_1\in\conv(\Gamma)$ for all $t\in [-1,1]$. Thus, for any $y\in B_{c_0}$,
$$
\tfrac{1}{2}y = \tfrac{1}{2}y_1e_1 + \tfrac{1}{2}(y-y_1e_1) \in B_Y .
$$

Now let $X=Y^*$. Then $X$ is an equivalent renorming of $\ell_1$. In fact, we may describe its norm $\norm{\cdot}_X$ explicitly: for any $x\in\ell_1$, we claim that
\begin{equation}
\label{eq:X eqv norm}
\norm{x}_X = \max\set{ \sum_{n\geq 2}\abs{x_n} , \abs{x_1}+\max_{n\geq 2}\abs{x_n} } .
\end{equation}
Indeed, note that
$$
\norm{x}_X = \sup_{y\in B_Y}\duality{x,y} = \sup_{y\in B_Y}\sum_{n=1}^\infty x_ny_n = \sup\set{ \sum_{n=1}^\infty x_ny_n : y\in\Gamma \text{ or } y\in B_{c_0},y_1=0 } .
$$
If $y\in B_{c_0}$ with $y_1=0$, the sum is bounded by $\sum_{n\geq 2}\abs{x_n}$ and this value can be approached by taking $y_n=\mathrm{sign}(x_n)$ for $2\leq n\leq N$ and $y_n=0$ for $n>N$. Similarly, for $y\in\set{e_1\pm e_n,-e_1\pm e_n}$, $n\geq 2$ the maximum value of $\duality{x,y}$ is $\abs{x_1}+\abs{x_n}$, and taking the maximum over $n$ yields \eqref{eq:X eqv norm}.

Let $V$ be the subspace of $X$ given by
$$
V = \set{x\in X : x_1=0} .
$$
It is clear that $V$ has codimension $1$ in $X$, and moreover $V=\ker(e_1)$ with $e_1\in Y$ so it is weak$^*$ closed. Applying \eqref{eq:X eqv norm} we see that $\norm{x}_X=\norm{x}_1$ for any $x\in V$, so $V$ is isometric to $\ell_1$. Finally, note that the mapping $P:X\to V$ given by $Px=x-x_1e_1^*$ (where $e_n^*$ denotes the $n$-th canonical basis vector of $\ell_1$) is a linear projection onto $V$, and $\norm{Px}_X = \norm{Px}_1 = \sum_{n\geq 2}\abs{x_n} \leq \norm{x}_X$, so $\norm{P}=1$. Thus $V$ is $1$-complemented. This shows that $X$ and $V$ satisfy items (a) and (c) in Theorem \ref{th:theorem 1}.

In order to establish condition (b), we will use the following criterion for uniqueness of the predual. It can be essentially found in Godefroy's survey \cite{Godefroy_predual}.

\begin{proposition}[cf.~\cite{Godefroy_predual}]
\label{pr:godefroy criterion}
Let $X$ be a dual Banach space, and let $\mathscr{C}_{w^*,w}$ denote the set of points of continuity of the identity mapping $(B_{X^*},w^*)\to(B_{X^*},w)$. Suppose that $\cl{\lspan}(\mathscr{C}_{w^*,w})$ is a norming subspace of $X^*$. Then $X$ has a strongly unique predual.
\end{proposition}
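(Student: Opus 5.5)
Let $Y_0\subset X^*$ denote the canonical predual, and let $Z\subset X^*$ be any closed subspace that is a predual of $X$. The plan is to show $Z=Y_0$ by sandwiching $Z$ between $Y_0$ and something it must contain and be contained in. Two standard facts drive the argument.

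First, recall the key structural fact: every $\varphi\in\mathscr{C}_{w^*,w}$ lies in every predual. Since $Z$ is a predual, the map $X\to Z^*$ is a surjective isometry, so $B_X$ is $\sigma(X,Z)$-compact. By Goldstine's theorem, applied in the bidual picture, $B_Z$ is $w^*$-dense in $B_{X^*}$; more precisely, the weak$^*$ closure of $B_Z$ in $X^*$ equals $B_{X^*}$, because $Z$ is norming with norming constant $1$. Hence for $\varphi\in B_{X^*}$ there is a net $(z_\alpha)\subset B_Z$ with $z_\alpha\to\varphi$ weak$^*$. If $\varphi$ is a point of $w^*$-to-$w$ continuity of the identity on $B_{X^*}$, then $z_\alpha\to\varphi$ weakly. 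Since $Z$ is norm closed and convex, it is weakly closed, so $\varphi\in Z$. By scaling, $\mathscr{C}_{w^*,w}\subset Z$, and therefore $W:=\cl{\lspan}(\mathscr{C}_{w^*,w})\subset Z$. The same argument gives $W\subset Y_0$.

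Second, we upgrade this inclusion to equality using the hypothesis that $W$ is norming. For this, recall that a predual $Z$ is minimal among norming subspaces: if $W\subset Z$ is a closed norming subspace, then $W=Z$. The justification is as follows. Since $W$ is norming, the restriction map $X\to W^*$ is an isometric embedding, and so $B_X$ is $\sigma(X,W)$-closed in the following sense. The topologies $\sigma(X,W)$ and $\sigma(X,Z)$ are comparable, with $\sigma(X,W)$ coarser. $B_X$ is $\sigma(X,Z)$-compact, hence $\sigma(X,W)$-compact as well, because a coarser Hausdorff topology preserves compactness. The topology $\sigma(X,W)$ is Hausdorff since $W$ is norming, hence separating. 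On the compact set $B_X$, the two Hausdorff topologies with one coarser than the other must coincide. Consequently, each $z\in Z$ is $\sigma(X,W)$-continuous on $B_X$. By the Banach–Dieudonné (Krein–Smulian) theorem, $z$ is then $\sigma(X,W)$-continuous on all of $X$, so $z$ lies in the dual of $(X,\sigma(X,W))$, which is $W$. Thus $Z\subset W$.

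Combining the two steps gives $Z=W=Y_0$, so $X$ has exactly one predual inside $X^*$, which is the strongly unique predual property. I expect the main obstacle to be the second step, namely applying Krein–Smulian correctly. We need $B_X$ to be compact for $\sigma(X,W)$, so that Banach–Dieudonné applies to the dual pair $(X,W)$ with $W$ a closed subspace of $X^*$. We also need to confirm that the continuous functionals of $\sigma(X,W)$ are exactly the elements of $W$, with no larger set. Both are standard once the pairing is set up. The first step only needs care with the normalization $B_Z\mapsto B_{X^*}$, which uses that $Z$ is $1$-norming; this holds because the map $X\to Z^*$ is an isometry.
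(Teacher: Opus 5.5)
Your proof is correct, and your first step is exactly the paper's: Goldstine's theorem, the point-of-continuity property and weak closedness of $Z$ give $\cl{\lspan}(\mathscr{C}_{w^*,w})\subset Z$ for every predual $Z$. The difference is in how you upgrade $W\subset Z$ to $W=Z$.

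The paper does this in one line with Hahn--Banach. If $W\subsetneq Z$, there is a nonzero element of $Z^*=X$ vanishing on $W$, so $W$ would not even separate the points of $X$. You instead compare $\sigma(X,Z)$ and $\sigma(X,W)$ on the compact set $B_X$, and then invoke Banach--Dieudonn\'e to get $Z\subset W$.

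This works, but there is a step you should state. Banach--Dieudonn\'e is a statement about the full dual $W^*$, so you need the restriction map $X\to W^*$ to be onto, not merely an isometric embedding. This does follow from what you have. The image of $B_X$ is weak$^*$ compact and absolutely convex, and its polar in $W$ is $B_W$. By the bipolar theorem it therefore equals $B_{W^*}$.

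Once that is established, $W$ is itself a predual of $X$ contained in $Z$, and the Hahn--Banach one-liner finishes at once. So the compactness and Krein--\v{S}mulian detour adds nothing to the proof beyond the unneeded by-product that $\sigma(X,Z)$ and $\sigma(X,W)$ agree on bounded sets. The paper's route is shorter, and it uses only that $W$ separates the points of $X$, not the full norming property.
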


\noindent The statement of Proposition \ref{pr:godefroy criterion} is given in \cite[Example II.2.1]{Godefroy_predual} with the condition ``$\cl{\lspan}(\mathscr{C}_{w^*,w})$ is norming'' replaced by the stronger ``$\wscl{\conv}(\mathscr{C}_{w^*,w})=B_{X^*}$'', but the same argument (based on combining Lemma I.5, Lemma I.2 and Theorem II.1 in \cite{Godefroy_predual}) yields our slightly more general formulation. A short, self-contained proof is also provided below.

We will prove that $\Gamma\subset\mathscr{C}_{w^*,w}$. This will be enough to finish the proof of Theorem \ref{th:theorem 1}, as
$$
e_n = \tfrac{1}{2}(e_1+e_n) + \tfrac{1}{2}(-e_1+e_n) \in \conv(\Gamma)
$$
for all $n\geq 2$ and we already saw that $e_1\in\conv(\Gamma)$ as well, thus $\cl{\lspan}(\Gamma)=Y$ is norming for $X$. Note that $\cl{\conv}(\Gamma)\neq B_Y$, hence why we need the stronger formulation of Proposition \ref{pr:godefroy criterion}.

Let us see that $y_n=e_1+e_n$ belongs to $\mathscr{C}_{w^*,w}$. Set $x_n=\frac{1}{3}e_1^*+\frac{2}{3}e_n^*\in B_X$. It is clear that $\duality{x_n,y_n}=1$. We also have $\duality{x_n,y}\leq\frac{1}{3}$ for any $y\in\Gamma\setminus\set{y_n}$ and $\duality{x_n,y}\leq\frac{2}{3}$ for any $y\in B_{c_0}$ with $y_1=0$. It follows that $\duality{x_n,c}\leq\frac{2}{3}$ for any $c$ belonging to the set
$$
C = \wscl{\conv}\big( \Gamma\setminus\set{y_n} \cup \set{y\in B_{c_0} : y_1=0} \big) \subset B_{X^*} .
$$
Note that $B_{X^*}=\conv(C\cup\set{y_n})$. Indeed, $C$ is weak$^*$ compact, therefore so is $\conv(C\cup\set{y_n})$, and the latter contains $\Gamma$ and all $y\in B_{c_0}$, $y_1=0$, so it contains $B_Y$ and thus $B_{X^*}$ by Goldstine's theorem. Therefore, any element $z\in B_{X^*}$ can be written as $z=ty_n+(1-t)c$ for some $t\in [0,1]$ and $c\in C$. For such an expression, we have
$$
\duality{x_n,z} = t\duality{x_n,y_n} + (1-t)\duality{x_n,c} \leq t+\tfrac{2}{3}(1-t) = 1-\tfrac{1}{3}(1-t)
$$
hence
$$
1-t \leq 3(1-\duality{x_n,z}) .
$$
It follows that
\begin{equation}
\label{eq:exp norm bound}
\norm{z-y_n}_{X^*} = (1-t)\norm{y_n-c}_{X^*} \leq 2(1-t) \leq 6(1-\duality{x_n,z}) .
\end{equation}
We conclude that $y_n$ is a point of weak$^*$-to-weak, even weak$^*$-to-norm continuity in $B_{X^*}$. Indeed, if $(z_i)$ is a net in $B_{X^*}$ that converges weak$^*$ to $y_n$ then in particular $\duality{x_n,z_i}$ converges to $\duality{x_n,y_n}=1$ and therefore $\norm{z_i-y_n}_{X^*}$ converges to $0$ by \eqref{eq:exp norm bound}.

A similar argument shows that $y'_n=e_1-e_n$ is a point of weak$^*$-to-norm continuity, by evaluating against $x'_n=\frac{1}{3}e_1^*-\frac{2}{3}e_n^*\in B_X$. By symmetry, this finishes the proof that $\Gamma\subset\mathscr{C}_{w^*,w}$ and thus of Theorem \ref{th:theorem 1}.

\begin{remark}
The space $X$ is very far from resembling a Lipschitz space (for instance, infinite-dimensional Lipschitz spaces cannot be separable because they contain $\ell_\infty$ \cite{CuthJohanis}), so Theorem \ref{th:theorem 1} does not preclude the possibility that Lemma \ref{lm:weaver lemma} may hold for Lipschitz spaces in particular.
\end{remark}

We end this section by providing a short proof of Proposition \ref{pr:godefroy criterion}.

\begin{proof}[Proof of Proposition \ref{pr:godefroy criterion}]
We will show that the only predual of $X$, seen as a subspace of $X^*$, is the space $N=\cl{\lspan}(\mathscr{C}_{w^*,w})$. Indeed, fix a predual $Y\subset X^*$ of $X$, and let us see that $Y$ contains $\mathscr{C}_{w^*,w}$. Let $c\in\mathscr{C}_{w^*,w}\subset B_{X^*}$, then by Goldstine's theorem there exists a net $(y_i)$ in $B_Y$ that converges weak$^*$ to $c$. Since $c\in\mathscr{C}_{w^*,w}$ we actually have $y_i\wconv c$ and therefore $c\in Y$. We conclude that $N\subset Y$. On the other hand, if there exists $y\in Y\setminus N$ then the Hahn-Banach theorem yields an element of $B_{Y^*}$, which we identify with $B_X$, that vanishes on $N$. Thus $N$ cannot be norming for $X$, a contradiction. Hence $Y=N$ as required.
\end{proof}

\section{Proof of Theorem 2}
\label{sec:theorem 2}

We start this section by defining the Lipschitz spaces under study. Let $M$ be a metric space with metric $d$. We assume tacitly that $M$ is a \emph{pointed} metric space, meaning that we have chosen a distinguished point $0\in M$ as a base point for our definitions. Then we define the \emph{Lipschitz space} over $M$ as
$$
\Lip_0(M) = \set{f:M\to\RR \,:\, \text{$f$ is Lipschitz and $f(0)=0$}} .
$$
This becomes a Banach space when endowed with the Lipschitz norm given by
$$
\lipnorm{f} = \sup\set{\frac{f(x)-f(y)}{d(x,y)} \,:\, x\neq y\in M}
$$
(note that, if the restriction $f(0)=0$ is lifted, $\lipnorm{\cdot}$ is merely a seminorm). If a different base point $0'\in M$ is chosen, the resulting Lipschitz space $\Lip_{0'}(M)$ is linearly isometric to $\Lip_0(M)$ via the mapping $f\mapsto f-f(0')$. Since every Lipschitz function on $M$ can be uniquely extended to the closure of $M$ with the same Lipschitz constant, the Lipschitz space over $M$ can also be identified with that over the completion of $M$. Thus, we will usually assume that $M$ is complete. In particular, in Theorem \ref{th:theorem 2} we may assume that $M$ is closed as the closure of a convex set is convex.

Lipschitz spaces are dual Banach spaces. The simplest way to construct their canonical preduals is as follows. Consider the evaluation functionals $\delta(x)\in\Lip_0(M)^*$, $x\in M$, given by $f\mapsto f(x)$. These functionals generate a subspace
$$
\lipfree{M} = \cl{\lspan}\set{\delta(x)\,:\,x\in M} ,
$$
of $\Lip_0(M)^*$, called the \emph{Lipschitz-free space} over $M$. The space $\lipfree{M}$ is always an isometric predual of $\Lip_0(M)$ under the usual identification; hence, if $\Lip_0(M)$ has a strongly unique predual, then that predual must be $\lipfree{M}$. The weak$^*$ topology induced by $\lipfree{M}$ on $\Lip_0(M)$ agrees on bounded subsets of $\Lip_0(M)$ (but not on the Lipschitz space as a whole) with the topology of pointwise convergence. For further reference on Lipschitz and Lipschitz-free spaces, we direct the reader to the monograph \cite{Weaver2} (where $\lipfree{M}$ is denoted $\text{\AE}(M)$ and called ``Arens-Eells space'' instead).

Let $N$ be a subset of $M$ containing $0$. By McShane's theorem \cite[Theorem 1.33]{Weaver2}, every function in $\Lip_0(N)$ can be extended to a function in $\Lip_0(M)$ without increasing its Lipschitz constant. As a consequence, $\lipfree{N}$ can be identified isometrically with the subspace
$$
\mathcal{F}_M(N) = \cl{\lspan}\set{\delta(x) \,:\, x\in N}
$$
of $\lipfree{M}$ (see \cite[Theorem 3.7]{Weaver2}), and we will usually do so by omitting the subscript $M$ whenever it is clear from context. We also consider the following subspaces of $\Lip_0(M)$:
\begin{align*}
\ideal{N} &= \set{ f\in\Lip_0(M) \,:\, \text{$f(x)=0$ for all $x\in N$} } \\
\const{N} &= \set{ f\in\Lip_0(M) \,:\, \text{$f$ is constant on $N$}}
\end{align*}
To be more precise we should write $\mathcal{I}_M(N)$ and $\mathcal{K}_M(N)$, but again the ambient metric space $M$ will usually be fixed and so we will omit it for simplicity. Note that $\const{N}=\ideal{N}$ if the base point belongs to $N$ or to its closure, but both spaces may be different in general.

\bigskip

Recall that a metric space $M$ is \emph{geodesic} if, for any $x,y\in M$ with $r=d(x,y)$, one can find an isometric copy of the interval $[0,r]\subset\RR$ in $M$ whose endpoints are $x$ and $y$. In particular, given any $t\in [0,r]$, one can find a point $z\in M$ such that $d(x,z)=t$ and $d(y,z)=r-t$ (and therefore $d(x,z)+d(z,y)=d(x,y)$).

In \cite{Weaver18}, Weaver gave a proof that $\Lip_0(M)$ has a strongly unique predual whenever $M$ has finite diameter based on the faulty Lemma \ref{lm:weaver lemma}, and then showed that this implies the same property whenever $M$ is geodesic. While the full result is no longer valid, the implication ``bounded $\Rightarrow$ geodesic'' is still correct as it is independent of Lemma \ref{lm:weaver lemma} (see \cite[Theorem 3.1]{Weaver_arxiv} or \cite[Theorem 3.27]{Weaver2}). The proof of this implication has the two following fundamental steps, presented here in opposite order as in the original:
\begin{itemize}
\item If $M$ is geodesic then, for any ball $B$ with center $0$, $\ideal{B}$ is weak$^*$ closed with respect to any predual of $\Lip_0(M)$.
\item If moreover $\Lip_0(B)$ has a strongly unique predual, then any predual of $\Lip_0(M)$ must contain $\lipfree{B}$.
\end{itemize}
Thus, if all $\Lip_0(B)$ have a strongly unique predual, then every predual of $\Lip_0(M)$ must contain $\lipfree{B}$ for all $B$, hence $\lipfree{M}$ in its entirety.

In the second step above, preduals of $\Lip_0(M)$ are again identified with closed subspaces of $\Lip_0(M)^*$. Since Weaver provides only a few details of that step in \cite{Weaver18}, we now state it in full rigor and provide details of the proof, for completeness and for future reference.


\begin{lemma}
\label{lm:predual and ideal}
Let $Y\subset\Lip_0(M)^*$ be a predual of $\Lip_0(M)$. Let $N\subset M$ be such that $0\in N$ and suppose that
\begin{enumerate}[label={\upshape{(\roman*)}}]
\item $\ideal{N}$ is closed in the weak topology $\sigma(\Lip_0(M),Y)$ induced by $Y$, and
\item $\lipfree{N}$ is the strongly unique predual of $\Lip_0(N)$.
\end{enumerate}
Then $\mathcal{F}_M(N)\subset Y$.
\end{lemma}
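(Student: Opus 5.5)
The plan is to use $Y$ to build a predual of $\Lip_0(N)$ and then apply (ii). Consider the restriction map $R:\Lip_0(M)\to\Lip_0(N)$, $Rf=f\restrict_N$. By McShane's theorem $R$ is a quotient map: it is surjective, and every $g\in\Lip_0(N)$ has an extension of the same norm. Its kernel is $\ideal{N}$. Therefore $\Lip_0(N)$ is isometric to the quotient $\Lip_0(M)/\ideal{N}$.

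Let $T:\Lip_0(M)\to Y^*$ be the isometry given by the predual $Y$. By (i), $\ideal{N}$ is $\sigma(\Lip_0(M),Y)$-closed. Hence $\ideal{N}=Z^\perp$, where
$$
Z=\ideal{N}_\perp=\set{y\in Y : \duality{y,f}=0 \text{ for all } f\in\ideal{N}}.
$$
Standard duality gives $Y^*/Z^\perp \cong Z^*$ isometrically, via restriction of functionals to $Z$. Combined with the previous paragraph, $\Lip_0(N)\cong\Lip_0(M)/\ideal{N}\cong Z^*$. The explicit isometry is $S:\Lip_0(N)\to Z^*$, with $\duality{Sg,z}=\duality{z,\tilde g}$ for $z\in Z$, where $\tilde g$ is any extension of $g$ in $\Lip_0(M)$. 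This is well defined because $z$ annihilates $\ideal{N}$. It is isometric because the norm of a functional on $Z$ induced by $\tilde g$ equals $\operatorname{dist}(\tilde g,\ideal{N})=\lipnorm{g}$.

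Next we transport $Z$ into $\Lip_0(N)^*$. Define $j:Z\to\Lip_0(N)^*$ by $\duality{j(z),g}=\duality{z,\tilde g}$. This equals $(R^*)^{-1}$ applied to $z$, since $z\in\ideal{N}^\perp=\operatorname{ran}R^*$. Because $R$ is a quotient map, $R^*$ is an isometric embedding of $\Lip_0(N)^*$ onto $\ideal{N}^\perp\subset\Lip_0(M)^*$. So $j(Z)$ is a closed subspace of $\Lip_0(N)^*$, and the canonical map $\Lip_0(N)\to j(Z)^*$ is exactly $S$, a surjective isometry. Thus $j(Z)$ is a predual of $\Lip_0(N)$ in the sense of the introduction. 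By (ii), $j(Z)=\lipfree{N}\subset\Lip_0(N)^*$. Applying $R^*$, we get $Z=R^*(\lipfree{N})$. Moreover $R^*$ maps the evaluation $\delta_N(x)$ to $\delta_M(x)$ for $x\in N$, so $R^*(\lipfree{N})=\mathcal{F}_M(N)$. Hence $\mathcal{F}_M(N)=Z\subset Y$.

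The main obstacle is the second step: checking carefully that $Z^*$ is isometric to the quotient by $\ideal{N}$, and that this requires weak$^*$-closedness with respect to $Y$. Without (i), $Z^\perp$ would only be the $\sigma(\Lip_0(M),Y)$-closure of $\ideal{N}$, and the identification would fail. I would verify surjectivity of $S$ by Hahn--Banach: extend $\phi\in Z^*$ to $Y$ with the same norm, then use $Y^*=T(\Lip_0(M))$. For the norm equality, I would use the identity $(Z^\perp)$ being the annihilator together with the isometry $Y^*/Z^\perp\cong Z^*$.
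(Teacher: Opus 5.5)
Your proof is correct and follows essentially the paper's route. In both, (i) makes $Y\cap\ideal{N}^\perp$ an isometric predual of $\Lip_0(M)/\ideal{N}\cong\Lip_0(N)$, and (ii) then forces it to coincide with $\mathcal{F}_M(N)$. The only difference is cosmetic: you invoke (ii) through the ``unique predual subspace'' formulation after transporting $Z$ into $\Lip_0(N)^*$ via $(R^*)^{-1}$, whereas the paper uses weak$^*$-continuity of the isometry $Q$ to write $Q=P^*$ and checks that the $P$-preimage of each $\delta(x)$ is $\delta(x)$.
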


\begin{proof}
Let $Y$ be a Banach space and $Z$ be a weak$^*$ closed subspace of $Y^*$. It is then standard that $(Z_\perp)^*$ is isometric to $Y^*/(Z_\perp)^\perp=Y^*/Z$ (see e.g. \cite[Theorems 4.7 and 4.9]{Rudin}). Applying this to the situation where $X$ is a Banach space, $Y\subset X^*$ is a predual of $X$, and $Z$ is a $\sigma(X,Y)$-closed subspace of $X$, we get a surjective isometry $S:X/Z\to (Y\cap Z^\perp)^*$ given by $\duality{S([x]),y}=\duality{Tx,y}=\duality{y,x}$ for $x\in X$, $y\in Y\cap Z^\perp$ (here $[x]=x+Z$ denotes the element of $X/Z$ containing $x$, and $T$ is as in the first paragraph of the introduction).

In particular, under hypothesis (i) this yields an isometry $S:\Lip_0(M)/\ideal{N}\to (Y\cap\ideal{N}^\perp)^*$.
On the other hand, by \cite[Lemma 2.27]{Weaver2}, $\Lip_0(N)$ is always isometric to $\Lip_0(M)/\ideal{N}$, with an isometry $R$ given by $f\mapsto [F]$ where $F$ is any Lipschitz extension of $f$ to $M$, whose existence is guaranteed by McShane's theorem. Combining both, we get a surjective isometry
$$
Q=SR:\Lip_0(N)\to (Y\cap\ideal{N}^\perp)^*
$$
given by $\duality{Qf,\phi}=\duality{\phi,F}$ for $f\in\Lip_0(N)$ and $\phi\in Y\cap\ideal{N}^\perp$, where $F$ is any Lipschitz extension of $f$ to $M$.

By hypothesis (ii), $Q$ must be weak$^*$-to-weak$^*$ continuous, so there exists a surjective isometry $P:Y\cap\ideal{N}^\perp\to\lipfree{N}$ such that $Q=P^*$.
Now let $x\in N$. Then there exists $\phi\in Y\cap\ideal{N}^\perp$ such that $P\phi=\delta(x)\in\lipfree{N}$. For any $f\in\Lip_0(M)$, we have that $f$ is an extension of $f\restrict_N$ and therefore
$$
\duality{\phi,f} = \duality{Q(f\restrict_N),\phi} = \duality{f\restrict_N,P\phi} = \duality{f\restrict_N,\delta(x)} = f(x) .
$$
It follows that $\phi=\delta(x)\in\lipfree{M}$. Thus $Y$ contains all $\delta(x)$, $x\in N$, and therefore also $\mathcal{F}_M(N)$.
\end{proof}

We will now adapt the first step in Weaver's argument to make it work in a finite-dimensional setting. The reduction to the bounded case will be replaced with a dimension reduction argument that makes it possible to work by induction on the dimension, ultimately relying on the $1$-dimensional case which is well-known.

We start by modifying Weaver's proof slightly so that the result holds for any ball, not necessarily one containing the base point, at the cost of replacing $\ideal{B}$ with $\const{B}$.

\begin{proposition}
\label{pr:k ball}
Let $M$ be a geodesic metric space. If $B$ is a closed ball in $M$ then $\const{B}$ is weak$^*$ closed with respect to any predual of $\Lip_0(M)$.
\end{proposition}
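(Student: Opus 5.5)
We will use the Krein--Smulian theorem. Fix a predual $Y\subset\Lip_0(M)^*$ and write $X=\Lip_0(M)$. Since $\const{B}$ is a linear subspace, it suffices to show that $\const{B}\cap nB_X$ is $\sigma(X,Y)$-closed for every $n>0$. The idea is to describe this set purely in terms of the norm of $X$. Norm balls are $\sigma(X,Y)$-closed for any predual $Y$, because the norm is $\sigma(X,Y)$-lower semicontinuous. So a description of the form $\norm{f+g}\leq c$, with $g$ fixed, gives a closed set automatically.

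Let $B$ be the closed ball with center $p$ and radius $r$. Define
$$
\varphi(x)=\max\set{r-d(x,p),0}-\max\set{r-d(0,p),0},
$$
so that $\varphi\in\Lip_0(M)$ and $\lipnorm{\varphi}\leq 1$. Our claim is that, for $f\in X$ with $\lipnorm{f}\leq n$,
$$
f\in\const{B} \iff \lipnorm{f+t\varphi}\leq\max\set{n,\abs{t}}\ \text{for all } t\in\RR .
$$
Granting the claim, $\const{B}\cap nB_X$ is the intersection of $nB_X$ with the sets $\set{f:\lipnorm{f+t\varphi}\leq\max\set{n,\abs{t}}}$, $t\in\RR$. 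Each of these is a translate of a multiple of $B_X$, hence $\sigma(X,Y)$-closed, and so is the intersection.

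For the implication ``$\Leftarrow$'', suppose $f$ is not constant on $B$. Then there is $x\in B$ with $f(x)\neq f(p)$. Since $M$ is geodesic, there is an isometric segment from $p$ to $x$, and along it $d(\cdot,p)$ increases with slope exactly $1$. The segment lies in $B$, so $\varphi(x)-\varphi(p)=-d(x,p)$. Hence
$$
\frac{\abs{(f+t\varphi)(x)-(f+t\varphi)(p)}}{d(x,p)}=\abs{\frac{f(x)-f(p)}{d(x,p)}-t} .
$$
Choose $t$ with $\abs{t}>n$ and with sign opposite to $f(x)-f(p)$. Then this quotient equals $\abs{t}+\abs{f(x)-f(p)}/d(x,p)>\max\set{n,\abs{t}}$, contradicting the right-hand side.

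The implication ``$\Rightarrow$'' is the main step. Let $f$ be constant on $B$, put $g=f+t\varphi$, and fix $u,v\in M$. Take a geodesic $\gamma:[0,L]\to M$ from $u$ to $v$, with $L=d(u,v)$. The function $g\circ\gamma$ is Lipschitz, hence differentiable a.e., and $\abs{g(u)-g(v)}\leq\int_0^L\abs{(g\circ\gamma)'}$. We split $[0,L]$ into two parts.
\begin{itemize}
\item On the closed set $E=\gamma^{-1}(B)$, the function $f\circ\gamma$ is constant. So at a.e.\ point of $E$ (density points where both derivatives exist) we have $(f\circ\gamma)'=0$, and therefore $\abs{(g\circ\gamma)'}=\abs{t}\abs{(\varphi\circ\gamma)'}\leq\abs{t}$.
\item On the open set $[0,L]\setminus E$, the function $\varphi\circ\gamma$ is locally constant. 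So $\abs{(g\circ\gamma)'}=\abs{(f\circ\gamma)'}\leq n$ there.
\end{itemize}
Integrating gives $\abs{g(u)-g(v)}\leq\max\set{n,\abs{t}}\,d(u,v)$, which is the required bound. The expected obstacle is exactly this step: $E$ may be a complicated closed set rather than finitely many intervals, which is why we use a.e.\ differentiation along geodesics instead of splitting the segment at finitely many boundary crossings.
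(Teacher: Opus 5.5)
Your proof is correct and follows the paper's route. Your $\varphi$ is exactly $-h$ for the paper's truncated distance $h(x)=\min\set{r,d(x,p)}-\min\set{r,d(0,p)}$. Your characterization with $n=1$ and $t=\pm1$, which by homogeneity already suffices, is the paper's identity $B_{\const{B}}=B_{\Lip_0(M)}\cap(B_{\Lip_0(M)}+h)\cap(B_{\Lip_0(M)}-h)$, and your ``$\Leftarrow$'' direction uses the same test on the pair $x,p$.

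The one real difference is the step you flag as the main obstacle, which the paper settles without a.e.\ differentiation. For $u\in B$ and $v\notin B$, it picks a single point $z$ on a geodesic from $u$ to $v$ with $d(z,p)=r$. Since $f$ is constant on $B\ni z$ and $\varphi$ is constant on $\set{x: d(x,p)\geq r}\ni z$, the triangle inequality gives $\abs{g(u)-g(v)}\leq\abs{t}\,d(u,z)+n\,d(z,v)\leq\max\set{n,\abs{t}}\,d(u,v)$. The cases $u,v\in B$ and $u,v\notin B$ are immediate, so the possibly complicated structure of $\gamma^{-1}(B)$ never matters and your measure-theoretic argument, while valid, is not needed.
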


\begin{proof}
Suppose that $B$ has center $p\in M$ and radius $r>0$. Define the following functions for $x\in M$:
$$
g(x) = \min\set{r,d(x,p)} \qquad\text{and}\qquad h(x) = g(x)-g(0) .
$$
Then $h\in B_{\Lip_0(M)}$, $h(x)-h(p)=d(x,p)$ for $x\in B$, and $h$ is constant outside of $B$. We claim that
\begin{equation}
\label{eq:intersection of balls}
B_{\const{B}} = B_{\Lip_0(M)} \cap (B_{\Lip_0(M)}+h) \cap (B_{\Lip_0(M)}-h) .
\end{equation}
This will be enough to prove the proposition. Indeed, $B_{\Lip_0(M)}$ and its translates are weak$^*$ compact with respect to any predual, so the same holds for $B_{\const{B}}$, and the Krein-\v{S}mulyan theorem implies that $\const{B}$ is weak$^*$ closed.

We first prove inclusion $\supset$ in \eqref{eq:intersection of balls}. Suppose that $f\in B_{\Lip_0(M)}$ is such that $\lipnorm{f\pm h}\leq 1$. Let $x\in B$, then
$$
f(x)-f(p) = (f+h)(x) - (f+h)(p) - (h(x)-h(p)) \leq d(x,p) - d(x,p) = 0
$$
and
$$
f(x)-f(p) = (f-h)(x) - (f-h)(p) + (h(x)-h(p)) \geq -d(x,p) + d(x,p) = 0
$$
so $f(x)=f(p)$. Since $x$ was arbitrary, we get $f\in\const{B}$ as needed.

Now we prove $\subset$ in \eqref{eq:intersection of balls}. Suppose that $f\in B_{\Lip_0(M)}$ is constant in $B$. We wish to prove that $\lipnorm{f\pm h}\leq 1$. Let $x,y\in M$. If $x,y\in B$ then
$$
\abs{(f\pm h)(x)-(f\pm h)(y)} = \abs{h(x)-h(y)} \leq d(x,y)
$$
as $f$ is constant on $B$. Similarly, if $x,y\in M\setminus B$ then
$$
\abs{(f\pm h)(x)-(f\pm h)(y)} = \abs{f(x)-f(y)} \leq d(x,y)
$$
as $h$ is constant outside of $B$. Finally, suppose $x\in B$ and $y\notin B$. Since $M$ is geodesic, there exists $z\in M$ such that $d(x,z)+d(z,y)=d(x,y)$ and $d(z,p)=r$. Then we have
\begin{align*}
\abs{(f\pm h)(x)-(f\pm h)(y)} &\leq \abs{(f\pm h)(x)-(f\pm h)(z)} + \abs{(f\pm h)(z)-(f\pm h)(y)} \\
&= \abs{h(x)-h(z)} + \abs{f(z)-f(y)} \\
&\leq d(x,z) + d(z,y) = d(x,y)
\end{align*}
as $f(z)=f(x)$ and $h(z)=h(y)$. This ends the proof.
\end{proof}

Now that the restriction on containing $0$ has been lifted, the result can be extended to more general domains.

\begin{proposition}
\label{pr:k open connected}
Let $M$ be a geodesic metric space. If $U\subset M$ is open and connected then $\const{U}$ is weak$^*$ closed with respect to any predual of $\Lip_0(M)$.
\end{proposition}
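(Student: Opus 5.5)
The plan is to write $\const{U}$ as an intersection of the spaces $\const{B}$ over closed balls $B\subset U$. Each $\const{B}$ is weak$^*$ closed by Proposition \ref{pr:k ball}, and an arbitrary intersection of weak$^*$ closed subspaces is weak$^*$ closed. The difficulty is that a function constant on each ball inside $U$ need not be constant on $U$, because the constants may differ from ball to ball. We therefore intersect over \emph{pairs} of balls. For overlapping balls $B_1,B_2$, a function constant on both takes the same value on each, hence is constant on $B_1\cup B_2$.

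Concretely, fix a point $q\in U$. For every $x\in U$, connectedness should let us chain from $q$ to $x$ by finitely many closed balls contained in $U$, each meeting the next. Then
$$
\const{U}=\bigcap_{\mathcal{C}}\ \bigcap_{B\in\mathcal{C}}\const{B},
$$
where $\mathcal{C}$ ranges over all such finite chains starting at $q$. Equivalently, and more simply, $\const{U}=\bigcap\set{\const{B}:B\subset U\text{ closed ball}}$ once we know this intersection already forces a common constant. The inclusion $\subset$ is trivial. For $\supset$, let $f$ be constant on every closed ball contained in $U$. Then $f$ is locally constant on $U$, since every point of $U$ is the center of some closed ball of positive radius inside $U$. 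A locally constant function on a connected set is constant, because its level sets are relatively open and partition $U$. So no chain bookkeeping is actually needed; local constancy together with connectedness suffices.

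The key steps are as follows. First, note that since $U$ is open, each $x\in U$ has some $r>0$ with closed ball $B(x,r)\subset U$; take for instance half the radius of an open ball around $x$ contained in $U$. Second, apply Proposition \ref{pr:k ball} to each such ball to see that $\const{B(x,r)}$ is weak$^*$ closed with respect to any predual $Y$. This applies because $M$ is geodesic and the proposition allows arbitrary centers. Third, form the intersection and identify it with $\const{U}$ via the local-constancy argument. Since the weak$^*$ topology $\sigma(\Lip_0(M),Y)$ makes each $\const{B}$ closed, their intersection, which is a linear subspace, is closed too.

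The main point to check is that Proposition \ref{pr:k ball} is valid for balls of small radius centered at any point, including balls not containing $0$. That is exactly why the paper replaced $\ideal{B}$ with $\const{B}$. A minor subtlety is degenerate radii or balls that happen to be all of $M$, but these cause no harm. The rest is the elementary topological fact about locally constant functions.
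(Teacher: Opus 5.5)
Your proposal is correct and is essentially the paper's own proof: you write $\const{U}$ as the intersection of the weak$^*$ closed subspaces $\const{B}$ over closed balls $B\subset U$ (by Proposition~\ref{pr:k ball}), and you obtain the inclusion $\supset$ from local constancy on the open set $U$ together with connectedness. The chain-of-balls discussion at the start is unnecessary, as you note yourself, and can be dropped.
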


\begin{proof}
This follows from Proposition \ref{pr:k ball} by observing that
$$
\const{U} = \bigcap\set{\const{B} \,:\, \text{$B$ is a closed ball contained in $U$} } .
$$
Indeed, inclusion $\subset$ is obvious. For the converse, note that any $f\in\bigcap_B\const{B}$ is locally constant at every point of $U$ because $U$ is open. Thus level sets $f^{-1}(c)\cap U$ are simultaneously open and closed in $U$ and, since $U$ is connected, $f$ must be constant in $U$.
\end{proof}

Next, we extend the result to a family of subsets of $M$ modeled after hyperplanes in Banach spaces. We give them a name for simplicity.

\begin{definition}
\label{def:wall}
A non-empty closed subset $W\subset M$ is a \emph{geodesic wall} in $M$ if there exist two sets $A,B\subset M$ such that
\begin{enumerate}[label={\upshape{(\alph*)}}]
\item $M$ equals the disjoint union $A\cup W\cup B$,
\item $W\subset\cl{A}\cap\cl{B}$,
\item $A,B$ are open and connected, and
\item for any $a\in A$, $b\in B$ there exists $w\in W$ such that $d(a,w)+d(w,b)=d(a,b)$.
\end{enumerate}
\end{definition}

\begin{proposition}
\label{pr:k wall}
Let $M$ be a geodesic metric space. If $W$ is a geodesic wall in $M$, then $\const{W}$ is weak$^*$ closed with respect to any predual of $\Lip_0(M)$.
\end{proposition}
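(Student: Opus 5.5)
The plan is to write $\const{W}$ as a sum of two spaces already known to be weak$^*$ closed, with a norm-controlled decomposition, and then apply the Krein--\v{S}mulyan theorem as in the proof of Proposition \ref{pr:k ball}. Let $A,B$ be as in Definition \ref{def:wall}, and fix an arbitrary predual of $\Lip_0(M)$. Since $A$ and $B$ are open and connected, Proposition \ref{pr:k open connected} shows that $\const{A}$ and $\const{B}$ are weak$^*$ closed. The key claim is that
$$
B_{\const{W}} = \pare{B_{\const{A}}+B_{\const{B}}} \cap B_{\Lip_0(M)} .
$$
Given this, $B_{\const{A}}$ and $B_{\const{B}}$ are weak$^*$ compact, so their sum is weak$^*$ compact. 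Intersecting with the weak$^*$ compact ball $B_{\Lip_0(M)}$ shows that $B_{\const{W}}$ is weak$^*$ compact, and Krein--\v{S}mulyan concludes.

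For the inclusion $\supset$, I will use continuity. A function constant on $A$ is constant on $\cl{A}$, and $W\subset\cl{A}$ by (b). Hence $\const{A}\subset\const{W}$, and likewise $\const{B}\subset\const{W}$. Therefore $B_{\const{A}}+B_{\const{B}}\subset\const{W}$, and intersecting with the unit ball gives $\supset$.

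For $\subset$, take $f\in B_{\const{W}}$ and let $c$ be its value on $W$. Define $g_A=f$ on $A\cup W$ and $g_A=c$ on $B$, and symmetrically $g_B=f$ on $B\cup W$ and $g_B=c$ on $A$. By (a), $f=g_A+g_B-c$ on $M$. Put $f_A=g_A-g_A(0)$ and $f_B=g_B-g_B(0)$; then $f_A,f_B$ vanish at $0$, and $f=f_A+f_B$ because $f(0)=0$ forces $g_A(0)+g_B(0)=c$. Clearly $f_A$ is constant on $B$ and $f_B$ is constant on $A$. It remains to check that $\lipnorm{g_A}\leq 1$, and symmetrically for $g_B$. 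Pairs of points both in $A\cup W$ or both in $B\cup W$ are handled directly, since $g_A$ equals $f$ or the constant $c$ there. The only real case, and the main obstacle, is $a\in A$, $b\in B$. Here condition (d) gives $w\in W$ with $d(a,w)+d(w,b)=d(a,b)$, so
$$
\abs{g_A(a)-g_A(b)}=\abs{f(a)-f(w)}\leq d(a,w)\leq d(a,b).
$$
Thus $f_A\in B_{\const{B}}$ and $f_B\in B_{\const{A}}$, which proves the inclusion $\subset$ and completes the plan.
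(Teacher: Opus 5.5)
Your proof is correct and is essentially the paper's argument: the same splitting of $f$ into a part constant on $B$ and a part constant on $A$, combined with Proposition \ref{pr:k open connected} and the Krein--\v{S}mulyan theorem. Your normalization at the base point reproduces exactly the paper's functions when $0\in\cl{A}$. The only real difference is that you prove $B_{\const{W}}=\pare{B_{\const{A}}+B_{\const{B}}}\cap B_{\Lip_0(M)}$ rather than the paper's exact identity $B_{\const{W}}=B_{\const{A}}+B_{\const{B}}$; this spares you the converse estimate $\lipnorm{g+h}\leq 1$, so you need condition (d) only once, which is a valid minor simplification.
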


\begin{proof}
Let $A,B$ be as in Definition \ref{def:wall}. We shall show that
\begin{equation}
\label{eq:sum of balls}
B_{\const{W}} = B_{\const{A}} + B_{\const{B}} .
\end{equation}
Again, this will be enough as $B_{\const{A}}$ and $B_{\const{B}}$ are weak$^*$ compact with respect to any predual by Proposition \ref{pr:k open connected}, so the same holds for their sum, thus $\const{W}$ is weak$^*$ closed by the Krein-\v{S}mulyan theorem.

Since $\cl{A}\cup\cl{B}=M$, we assume without loss of generality that the base point $0$ belongs to $\cl{A}$. Given $f\in\const{W}$ with $\lipnorm{f}\leq 1$, let $c$ be the constant value of $f$ at $W$ and define functions $g$ and $h$ on $M$ by
$$
g(x) = \begin{cases}
0 &\text{, if $x\in A$} \\
f(x)-c &\text{, if $x\notin A$}
\end{cases}
\qquad\text{and}\qquad
h(x) = \begin{cases}
c &\text{, if $x\in B$} \\
f(x) &\text{, if $x\notin B$}
\end{cases}
.
$$
It is clear that $g$ is constant on $\cl{A}\supset A\cup W$, $h$ is constant on $\cl{B}\supset B\cup W$, $g(0)=h(0)=0$, and $f=g+h$. We may also check that $\lipnorm{g}\leq 1$. Indeed, we have $g(x)-g(y)=0$ if $x,y\in\cl{A}$ and $\abs{g(x)-g(y)}=\abs{f(x)-f(y)}\leq d(x,y)$ if $x,y\in\cl{B}$. Now assume $x\in A$ and $y\in B$. Then there exists $w\in W$ such that $d(x,w)+d(w,y)=d(x,y)$, and
$$
\abs{g(x)-g(y)} = \abs{f(y)-c} = \abs{f(y)-f(w)} \leq d(w,y) \leq d(x,y) .
$$
Similar reasoning yields $\lipnorm{h}\leq 1$. Hence $g\in B_{\const{A}}$ and $h\in B_{\const{B}}$. This establishes inclusion $\subset$ in \eqref{eq:sum of balls}.

Conversely, let $g\in B_{\const{A}}$ and $h\in B_{\const{B}}$, and set $f=g+h$. Clearly $f$ is constant on $W\subset\cl{A}\cap\cl{B}$. Again, we have $\abs{f(x)-f(y)}=\abs{h(x)-h(y)}\leq d(x,y)$ if $x,y\in\cl{A}$ and $\abs{f(x)-f(y)}=\abs{g(x)-g(y)}\leq d(x,y)$ if $x,y\in\cl{B}$. For $x\in A$ and $y\in B$, choose $w\in W$ such that $d(x,w)+d(w,y)=d(x,y)$, then
\begin{align*}
\abs{f(x)-f(y)} &\leq \abs{f(x)-f(w)} + \abs{f(w)-f(y)} \\
&= \abs{h(x)-h(w)} + \abs{g(w)-g(y)} \\
&\leq d(x,w) + d(w,y) = d(x,y) .
\end{align*}
We conclude that $\lipnorm{f}\leq 1$ and $f\in B_{\const{W}}$. This completes the proof of \eqref{eq:sum of balls}.
\end{proof}

We may now finally prove Theorem \ref{th:theorem 2} by applying Proposition \ref{pr:k wall} to hyperplanes of increasing dimension.

\begin{proof}[Proof of Theorem \ref{th:theorem 2}]
The theorem is obviously true if $M$ is a singleton. Otherwise, we may assume without loss of generality that $M$ is a convex subset of an $n$-dimensional Banach space $X$ whose interior contains the origin $0$ of $X$, which we take as base point.

We proceed by induction on the dimension $n$. If $n=1$ then $M$ is an interval, possibly of infinite length. In that case, it is well known that $\Lip_0(M)$ is linearly isometric to $L_\infty(M)$ via the mapping $f\mapsto f'$ (see e.g. \cite[Example 2.7]{Weaver2}), so it has a strongly unique predual.

Now assume that the theorem holds for dimension $n-1$. Let $H$ be any hyperplane of $X$ containing $0$. Then $H\cap M$ is a convex subset of $H$, which has dimension $n-1$, so $\Lip_0(H\cap M)$ has a strongly unique predual by induction hypothesis. Note that $H\cap M$ is a geodesic wall in $M$: in Definition \ref{def:wall} we may take $A$ and $B$ to be the intersection with $M$ of the two open half-spaces determined by $H$, which are open in $M$ and convex, hence connected, and the point $w$ of item (d) to be the intersection of the linear segment $[a,b]$ with $H$, which belongs to $M$ by convexity. Since $H\cap M$ contains $0$, Proposition \ref{pr:k wall} implies that $\ideal{H\cap M}=\const{H\cap M}$ is weak$^*$ closed with respect to any predual of $\Lip_0(M)$.

Let $Y\subset\Lip_0(M)^*$ be any predual of $\Lip_0(M)$. By Lemma \ref{lm:predual and ideal} we get that $\lipfree{H\cap M}\subset Y$. Since this is true for every hyperplane $H$, and every non-zero $x\in M$ is contained in some such hyperplane by the Hahn-Banach theorem, we get $\delta(x)\in Y$ for all $x\in M$ and thus $\lipfree{M}\subset Y$. But $\lipfree{M}$ is already a predual of $\Lip_0(M)$, so we conclude $Y=\lipfree{M}$ as it is not possible for one predual to strictly contain another one by the same argument used at the end of the proof of Proposition \ref{pr:godefroy criterion}. Thus $\lipfree{M}$ is the strongly unique predual of $\Lip_0(M)$.
\end{proof}

\begin{remark}
Propositions \ref{pr:k ball} and \ref{pr:k open connected} remain valid if one assumes that $M$ is length rather than geodesic. Recall that a metric space $M$ is a \emph{length space} if, given any $x,y\in M$ and any $\ep>0$, there is a rectifiable path in $M$ joining $x$ and $y$ whose length is less than $d(x,y)+\ep$. In that case, the last part of the proof of Proposition \ref{pr:k ball} can be modified as follows: for every $\ep>0$, one may find $z\in M$ such that $d(x,z)+d(z,y)\leq d(x,y)+\ep$ and $d(z,p)=r$. Then the computation yields $\abs{(f\pm h)(x)-(f\pm h)(y)}\leq d(x,y)+\ep$ and, since $\ep$ was arbitrary, one concludes $\lipnorm{f\pm h}\leq 1$ anyway.

A similar adjustment shows that Proposition \ref{pr:k wall} stays valid when $M$ is length and $W$ is a \emph{length wall}, where condition (d) in Definition \ref{def:wall} is weakened to: \textit{for any $a\in A$, $b\in B$ and $\ep>0$ there exists $w\in W$ such that $d(a,w)+d(w,b)\leq d(a,b)+\ep$.}

These stronger versions are not relevant to our finite-dimensional setting because length spaces that are complete and locally compact are already geodesic by the Hopf-Rinow theorem (see e.g. \cite[Proposition 3.7]{BridsonHaefliger}).
\end{remark}

\section*{Acknowledgments}

F. Vico was supported by the Simons Foundation under project SFI-FI-CCM-Grant-00030182.


\section*{AI disclosure statement}

We acknowledge the use of ChatGPT 5.6 Pro by OpenAI as a tool to obtain the results in this note and to double-check the validity of the final text. The manuscript was entirely written by the authors, who verified its content independently and take full responsibility for it.


\end{document}